\newcounter{makeconstant}
\newenvironment{makeconstant}%
{\refstepcounter{makeconstant}}%
{}
\def\mc#1{\begin{makeconstant}\label{#1}\end{makeconstant}}
\def\({\left(}\def\){\right)}
\def\imag{{\rm i}}
\def\eul{{\rm e}}
\def\dee{\thinspace{\rm{d}}}
\DeclareMathOperator{\genlin}{GL}
\newcommand{\divides}{|}
\newtheorem{theorem}{Theorem}
\newtheorem{lemma}[theorem]{Lemma}
\theoremstyle{definition}
\newtheorem{example}[theorem]{Example}
\renewcommand{\le}{\leqslant}
\renewcommand{\ge}{\geqslant}
\renewcommand{\epsilon}{\varepsilon}
\def\bigo{\operatorname{O}}    %% uniform notation for 'big O'
\def\littleo{\operatorname{o}} %% uniform notation for 'little o'
\begin{document}

\title{Mertens' theorem for toral automorphisms}
\author{Sawian Jaidee}\address{(SJ) Department of Mathematics,
123 Mittraphab Road, Khon Kaen University 40002,Thailand}
\author{Shaun Stevens}
\author{Thomas Ward}\address{(SS \& TW) School of Mathematics, University of East Anglia, Norwich
NR4 7TJ, UK}
\date{\today}

\subjclass[2000]{37C35, 11J72}
\thanks{}

\begin{abstract}
A dynamical Mertens' theorem for ergodic toral automorphisms
with error term~$\bigo(N^{-1})$ is found, and the influence of
resonances among the eigenvalues of unit modulus is examined.
Examples are found with many more, and with many fewer,
periodic orbits than expected.
\end{abstract}

\maketitle

\section{Introduction}

%Mertens' original theorem~\cite{mertens} for the sum over the
%reciprocal primes states that
%\[
%\sum_{p\le N}\frac{1}{p}=\log\log N+\gamma+\sum_{n=2}^{\infty}
%\mu(n)\frac{\log\{\zeta(n)\}}{n}+\delta,
%\]
%with~$\vert\delta\vert<\frac{4}{\log(N+1)}+ \frac{2}{N\log N}$
%(see Villarino~\cite{villarino} for an account of Mertens' own
%proof). The error term is improved to~$\bigo\((\log N)^{-2}\)$
%by Dusart~\cite{dusart} and (conditional on the Riemann
%hypothesis) to~$\bigo(\log N/\sqrt{N})$ by
%Schoenfeld~\cite{MR0457374}.

Discrete dynamical analogs of Mertens' theorem concern a
map~$T:X\to X$, and are motivated by work of
Sharp~\cite{MR1139566} on Axiom A flows. A set of the form
\[
\tau=\{x,T(x),\dots,T^k(x)=x\}
\]
with cardinality~$k$ is called a closed orbit of
length~$\vert\tau\vert=k$, and the results provide asymptotics
for a weighted sum over closed orbits. For the discrete case of
a hyperbolic diffeomorphism~$T$, we always have
\[
M_T(N):=\sum_{\vert\tau\vert\le
N}\frac{1}{\eul^{h\vert\tau\vert}}\sim\log(N),
\]
where~$h$ is the topological entropy, with more explicit
additional terms in many cases. The main term~$\log(N)$ is not
really related to the dynamical system, but is a consequence of
the fact that the number of orbits of length~$n$
is~$\frac{1}{n}\eul^{hn}+\bigo(\eul^{h'\!n})$ for some~$h'<h$
(see~\cite{apisit}). Without the assumption of hyperbolicity,
the asymptotics change significantly, and in particular depend
on the dynamical system. For quasihyperbolic (ergodic but not
hyperbolic) toral automorphisms, Noorani~\cite{MR1787646} finds
an analogue of Mertens' theorem in\mc{nooraniconstant} the form
\begin{equation}\label{nooranimain}
M_T(N)=m\log(N)+C_{\ref{nooraniconstant}}+\littleo(1)
\end{equation}
for some~$m\in\mathbb N$. The
constant~$C_{\ref{nooraniconstant}}$ is related to analytic
data coming from the dynamical zeta function. For more general
non-hyperbolic group automorphisms, the coefficient of the main
term may be non-integral (see~\cite{MR2339472} for example).
%the details).

In this note Noorani's result~\eqref{nooranimain} with improved error
term~$\bigo(N^{-1})$ is recovered using elementary arguments,
and the coefficient~$m$ of the main term in~\eqref{nooranimain}
is expressed as an integral over a sub-torus. This reveals the
effect of resonances between the eigenvalues of unit modulus,
and examples show that the value of~$m$ may be very different to
the generic value given in~\cite{MR1787646}.

\section{Toral automorphisms}

Let~$T:\mathbb T^d\to\mathbb T^d$ be a toral automorphism
corresponding to a matrix~$A_T$ in~$\genlin_{d}(\mathbb Z)$
with eigenvalues~$\{\lambda_i\mid1\le i\le d\}$, arranged so
that
\[
\vert\lambda_1\vert\ge\cdots\ge\vert\lambda_s\vert>1=
\vert\lambda_{s+1}\vert=\cdots=\vert\lambda_{s+2t}\vert>
\vert\lambda_{s+2t+1}\vert\ge\cdots\ge\vert\lambda_d\vert.
\]
The map~$T$ is \emph{ergodic} with respect to Lebesgue measure
if no eigenvalue is a root of unity, is \emph{hyperbolic} if in
addition~$t=0$ (that is, there are no eigenvalues of unit
modulus), and is \emph{quasihyperbolic} if it is ergodic
and~$t>0$. The topological entropy of~$T$ is given
by~$h=h(T)=\sum_{j=1}^{s}\log\vert\lambda_j\vert$.

\begin{theorem}\label{theorem}
Let~$T$ be a quasihyperbolic
toral automorphism with topological
entropy~$h$. Then there are\mc{mainconstantquasicase}
constants~$C_{\ref{mainconstantquasicase}}$ and~$m\ge1$ with
\[
\sum_{\vert\tau\vert\le N}\frac{1}{\eul^{h\vert\tau\vert}}=m\log
N+C_{\ref{mainconstantquasicase}}+\bigo\(N^{-1}\).
\]
The coefficient~$m$ in the main term is given by
\[
m=\int_X\prod_{i=1}^{t}
\(2-2\cos(2\pi x_i)\)\dee x_1\dots\dee x_{t},
\]
where~$X\subset\mathbb T^d$ is the closure
of~$\{(n\theta_1,\dots,n\theta_{t})\mid n\in\mathbb Z\}$,
and~$\eul^{\pm2\pi{\imag}\theta_1},\dots,
\eul^{\pm2\pi{\imag}\theta_{t}}$ are the eigenvalues with unit
modulus of the matrix defining~$T$.
\end{theorem}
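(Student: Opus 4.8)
The plan is to reduce the weighted orbit sum to a count of periodic points and then to a one–dimensional exponential sum. First I would use the standard facts that the number of points fixed by $T^n$ is $\fix_T(n)=\vert\det(A_T^n-I)\vert=\prod_{i=1}^d\vert\lambda_i^n-1\vert$ (nonzero since $T$ is ergodic) and the Möbius relation $n\,\orbit_T(n)=\sum_{d\mid n}\mu(n/d)\fix_T(d)$, so that $\sum_{\vert\tau\vert\le N}\eul^{-h\vert\tau\vert}=\sum_{n\le N}\eul^{-hn}\orbit_T(n)$. Splitting the product over eigenvalues by modulus: the $s$ expanding eigenvalues contribute $\eul^{hn}\prod_{j\le s}\vert1-\lambda_j^{-n}\vert=\eul^{hn}(1+\bigo(\rho^n))$, the contracting ones contribute $\prod\vert1-\lambda_j^{n}\vert=1+\bigo(\rho^n)$, and the $2t$ eigenvalues of unit modulus, which (as $T$ is ergodic) split into non-real conjugate pairs $\eul^{\pm2\pi\imag\theta_k}$ with $\theta_k$ irrational, contribute exactly $g(n):=\prod_{k=1}^{t}\bigl(2-2\cos(2\pi n\theta_k)\bigr)$. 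Hence $\fix_T(n)=\eul^{hn}g(n)(1+\bigo(\rho^n))$ for some $\rho<1$; since the non-leading terms in the Möbius sum are $\bigo(n\eul^{hn/2})$ and $g$ is bounded by $4^{t}$, dividing through gives
\[
\eul^{-hn}\orbit_T(n)=\frac{g(n)}{n}+E(n),\qquad E(n)=\bigo(\beta^n)\ \text{ for some }\beta<1 .
\]
Thus $\sum_{n\le N}E(n)$ converges, with tail $\bigo(\beta^N)=\bigo(N^{-1})$, and the whole problem collapses to the asymptotics of $S(N):=\sum_{n\le N}g(n)/n$.

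The key observation for $S(N)$ is that $g$ is a \emph{trigonometric polynomial}: writing $2-2\cos(2\pi x_k)=\vert1-\eul^{2\pi\imag x_k}\vert^2$ and expanding, $g(x)=\sum_{\mathbf j\in\{-1,0,1\}^{t}}c_{\mathbf j}\eul^{2\pi\imag\langle\mathbf j,x\rangle}$ with $c_{\mathbf 0}=2^{t}$ and every $c_{\mathbf j}\in\mathbb Z$. Substituting $x=n\theta$ and summing over $n\le N$, the frequency $\mathbf j$ contributes $c_{\mathbf j}N$ if $\langle\mathbf j,\theta\rangle\in\mathbb Z$ (a \emph{resonance}) and a geometric sum of modulus at most $2/\vert\eul^{2\pi\imag\langle\mathbf j,\theta\rangle}-1\vert$ otherwise; since $g$ is a trig polynomial there is no small-denominator issue and this remainder is genuinely bounded. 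Therefore $\sum_{n\le N}g(n)=mN+R(N)$ with $R(N)=\bigo(1)$ and $m:=\sum_{\mathbf j:\langle\mathbf j,\theta\rangle\in\mathbb Z}c_{\mathbf j}$, and an elementary Abel summation turns this into $S(N)=m\log N+C'+\bigo(N^{-1})$. Combined with the first paragraph this yields the stated expansion with this value of $m$; the boundedness of $R$ is exactly what upgrades the earlier $\littleo(1)$ to $\bigo(N^{-1})$.

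It remains to identify $m$ with the integral and to prove $m\ge1$. A character $x\mapsto\eul^{2\pi\imag\langle\mathbf j,x\rangle}$ is trivial on $X=\overline{\{n\theta:n\in\mathbb Z\}}$ precisely when $\langle\mathbf j,\theta\rangle\in\mathbb Z$, so integrating the Fourier expansion of $g$ against Haar measure $\mu_X$ kills all non-resonant frequencies and leaves $\int_X g\dee\mu_X=\sum_{\mathbf j:\langle\mathbf j,\theta\rangle\in\mathbb Z}c_{\mathbf j}=m$ (equivalently, $n\theta$ equidistributes on $X$, which is $\mathbb T^{t}$ generically). For the lower bound, write $g=\vert f\vert^2$ with $f(x)=\prod_{k=1}^{t}(1-\eul^{2\pi\imag x_k})=\sum_{S\subseteq\{1,\dots,t\}}(-1)^{\vert S\vert}\eul^{2\pi\imag\langle\mathbf 1_S,x\rangle}$; grouping the characters $\eul^{2\pi\imag\langle\mathbf 1_S,\cdot\rangle}$ according to their (distinct) restrictions to $X$ and applying Parseval on $L^2(X,\mu_X)$ gives $m=\bigl\|f\vert_X\bigr\|^2=\sum_C\bigl(\sum_{S\in C}(-1)^{\vert S\vert}\bigr)^2$, a sum of squares of integers, so $m\in\mathbb Z_{\ge0}$. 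Finally $m\ne0$: otherwise $f$ vanishes identically on $X$, so every point of $X$ has a zero coordinate and $X\subseteq\bigcup_{k}\{x_k=0\}$; the identity component $X^0$, being connected, cannot be a finite union of proper closed subgroups, so $X^0\subseteq\{x_{k_0}=0\}$ for some $k_0$, whence the projection $X\to\mathbb T$ onto the $k_0$-th coordinate is trivial on $X^0$ and therefore has finite image — contradicting that this image is $\overline{\{n\theta_{k_0}\}}=\mathbb T$, since $\eul^{2\pi\imag\theta_{k_0}}$ is not a root of unity. Hence $m\ge1$.

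I expect this last paragraph — the $L^2$/combinatorial description of $m$ and the non-vanishing argument — to be the only genuinely delicate point; the reduction to $\sum g(n)/n$, the exponential-sum estimate and the Abel summation are essentially bookkeeping, made painless by the fact that $g$ is a trigonometric polynomial.
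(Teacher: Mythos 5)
Your proposal follows essentially the same route as the paper: reduce $M_T(N)$ to $\sum_{n\le N}V_n/n$ up to an exponentially small error (your $g(n)$ is the paper's $V_n$), expand $V_n$ as a trigonometric polynomial indexed by subsets/products of the unit-modulus eigenvalues, split into the resonant frequency $\omega=1$ (coefficient $m=K(1)$) and the finitely many non-resonant ones, and use Abel/partial summation plus $\sum_{n\le N}1/n=\log N+\gamma+\bigo(N^{-1})$ to extract the main term with $\bigo(N^{-1})$ error; finally, identify $m$ with $\int_X\prod(2-2\cos 2\pi x_i)\dee\mu_X$ by matching characters trivial on $X$ (the paper phrases this via the Kronecker--Weyl equidistribution theorem, you via orthogonality of characters, but these are the same computation).

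Where you genuinely go beyond the paper is the proof that $m\ge1$. The paper asserts $m\ge1$ in the statement but never returns to it: the integral representation shows $m\ge0$, and $m=K(1)\in\mathbb Z$, but the strict positivity is left unaddressed. Your argument --- writing $m=\|f|_X\|_{L^2(X)}^2$ with $f=\prod_k(1-\eul^{2\pi\imag x_k})$, observing via Parseval that this is a sum of squares of integers, and then ruling out $f|_X\equiv 0$ by noting that a connected compact abelian group cannot be covered by finitely many proper closed subgroups and that the $k_0$-th coordinate projection of $X$ is all of $\mathbb T$ because $\theta_{k_0}$ is irrational --- is a correct and welcome supplement. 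Everything else checks out; the reduction to $\sum g(n)/n$, the trigonometric-polynomial bookkeeping, and the Abel summation step are carried out as the paper does them.
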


As we will see in Example~\ref{examples}, the quantity~$m$
appearing in Theorem~\ref{theorem} takes on a wide range of
values. In particular,~$m$ may be much larger, or much smaller,
than its generic value~$2^t$.

\begin{proof}
Since~$T$ is ergodic,
\begin{equation*}%\label{badFnquasih}
F_T(n)=\vert\{x\in\mathbb T^d\mid T^n(x)=x\}\vert=\vert\mathbb
Z^d/(A_T^n-I)\mathbb Z^d\vert=\prod_{i=1}^{d}\vert\lambda_i^n-1\vert,
\end{equation*}
so
\[
O_T(n)=\frac{1}{n}\sum_{m\divides
n}\mu(n/m)\prod_{i=1}^{d}\vert\lambda_i^m-1\vert.
\]
Write~$\Lambda=\prod_{i=1}^{s}\lambda_i$ (so the topological entropy of~$T$
is~$\log\vert\Lambda\vert$) and
\[
\kappa=\min\{\vert\lambda_s\vert,\vert\lambda_{s+2t+1}\vert^{-1}\}>1.
\]
The eigenvalues of unit modulus contribute nothing to the
topological entropy, but multiply the
approximation~$\vert\Lambda\vert^n$ to~$F_T(n)$ by an
almost-periodic factor bounded above by~$2^{2t}$ and bounded
below by~$A/n^B$ for some~$A,B>0$, by Baker's theorem
(see~\cite[Ch.~3]{MR1700272} for this argument).

\begin{lemma}\label{lemma:1}
$\left\vert
F_T(n)-\vert\Lambda\vert^n\displaystyle\prod_{i=s+1}^{s+2t}\vert\lambda_i^n-1\vert
\right\vert\cdot\vert\Lambda\vert^{-n}=\bigo(\kappa^{-n}).$
\end{lemma}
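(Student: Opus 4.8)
The plan is to expand $F_T(n)=\prod_{i=1}^{d}\vert\lambda_i^n-1\vert$ according to the three blocks into which the eigenvalues are grouped by modulus, and to check that every factor outside the unit-modulus block is $1+\bigo(\kappa^{-n})$ once the expanding eigenvalues have been divided out.

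First I would write
\[
F_T(n)=\prod_{i=1}^{s}\vert\lambda_i^n-1\vert\cdot\prod_{i=s+1}^{s+2t}\vert\lambda_i^n-1\vert\cdot\prod_{i=s+2t+1}^{d}\vert\lambda_i^n-1\vert
\]
and deal with the first and third products. For $1\le i\le s$ we have $\vert\lambda_i^n-1\vert=\vert\lambda_i\vert^n\vert 1-\lambda_i^{-n}\vert$ with $\vert\lambda_i^{-n}\vert\le\vert\lambda_s\vert^{-n}\le\kappa^{-n}$, so $\vert 1-\lambda_i^{-n}\vert=1+\bigo(\kappa^{-n})$ uniformly in $n$, and hence $\prod_{i=1}^{s}\vert\lambda_i^n-1\vert=\vert\Lambda\vert^{n}\bigl(1+\bigo(\kappa^{-n})\bigr)$. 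Similarly, for $s+2t+1\le i\le d$ we have $\vert\lambda_i^n\vert\le\vert\lambda_{s+2t+1}\vert^{n}\le\kappa^{-n}$, so $\vert\lambda_i^n-1\vert=1+\bigo(\kappa^{-n})$ and the third product is $1+\bigo(\kappa^{-n})$ as well. Since $\kappa>1$ and the number of factors is fixed, the product of all these finitely many error factors is again of the form $1+\bigo(\kappa^{-n})$.

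Putting this together gives $\vert\Lambda\vert^{-n}F_T(n)=\prod_{i=s+1}^{s+2t}\vert\lambda_i^n-1\vert\cdot\bigl(1+\bigo(\kappa^{-n})\bigr)$, so that
\[
\left\vert F_T(n)-\vert\Lambda\vert^{n}\prod_{i=s+1}^{s+2t}\vert\lambda_i^n-1\vert\right\vert\cdot\vert\Lambda\vert^{-n}
=\prod_{i=s+1}^{s+2t}\vert\lambda_i^n-1\vert\cdot\bigo(\kappa^{-n}).
\]
Finally, the trivial bound $\vert\lambda_i^n-1\vert\le\vert\lambda_i\vert^{n}+1=2$ valid for each unit-modulus eigenvalue gives $\prod_{i=s+1}^{s+2t}\vert\lambda_i^n-1\vert\le 2^{2t}$, and the lemma follows.

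The argument is essentially bookkeeping; the only points needing care are that all the implied constants are kept independent of $n$ (they depend only on $d$ and on the eigenvalues through $\kappa$), and that the unit-modulus block enters here only through the crude upper bound $2^{2t}$. The delicate lower bound for this block furnished by Baker's theorem, alluded to before the statement, plays no role in this lemma — it will be needed only later, when the main term of the Mertens asymptotic is extracted.
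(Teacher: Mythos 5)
Your proof is correct and follows essentially the same decomposition as the paper: split $F_T(n)$ into the three eigenvalue blocks, show the expanding block contributes $\vert\Lambda\vert^n\bigl(1+\bigo(\kappa^{-n})\bigr)$ and the contracting block contributes $1+\bigo(\kappa^{-n})$, and bound the unit-modulus block crudely by $2^{2t}$. The only cosmetic difference is that you keep absolute values throughout and factor each term multiplicatively as $\vert\lambda_i\vert^n\vert 1-\lambda_i^{-n}\vert$, whereas the paper expands $U_n$ and $W_n$ additively into sums of $2^s$ and $2^{d-2t-s}$ terms and then invokes the reverse triangle inequality at the end; your version avoids that last step and is, if anything, a touch cleaner.
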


\begin{proof}
We have
\begin{equation}\label{equation:definesABC}
\prod_{i=1}^{d}(\lambda_i^n-1)=
\underbrace{\prod_{i=1}^{s}(\lambda_i^n-1)}_{U_n}
\underbrace{\prod_{i=s+1}^{s+2t}(\lambda_i^n-1)}_{V_n}
\underbrace{\prod_{i=2t+s+1}^{d}(\lambda_i^n-1)}_{W_n},
\end{equation}
where~$U_n$ is equal to the sum of~$\Lambda^n$ and~$(2^s-1)$ terms
comprising products of eigenvalues, each no larger
than~$\kappa^{-n}\vert\Lambda\vert^n$ in modulus,~$W_n$ is equal to
the sum of~$(-1)^{d-s}$ and~$2^{d-2t-s}-1$ terms bounded above in
absolute value by~$\kappa^{-n}$, and~$\vert V_n\vert\le2^{2t}$. It
follows that
\begin{eqnarray*}
\frac{\left\vert\prod_{i=1}^{d}(\lambda_i^n-1)-
(-1)^{d-s}\Lambda^n\prod_{i=s+1}^{s+2t}(\lambda_i^n-1)
\right\vert}{\vert\Lambda\vert^{n}}&=& \frac{\left\vert V_n\(U_nW_n-
(-1)^{d-s}\Lambda^n\)\right\vert}{\vert\Lambda\vert^{n}}\\
&=&\frac{\left\vert
V_n\(\Lambda^n+\bigo\(\Lambda^n/\kappa^n\)-\Lambda^n\)
\right\vert}{\vert\Lambda\vert^n}\\
&=&\bigo(\kappa^{-n}).
\end{eqnarray*}
The statement of the lemma follows by the reverse triangle
inequality.
\end{proof}

Now
\[
M_T(N)=\sum_{n=1}^{N}\frac{1}{n\vert\Lambda\vert^n}\(F_T(n)+\sum_{d\divides
n,d<n}\mu\(\textstyle\frac{n}{d}\)F_T(d)\)
\]
and
\[
\left\vert
\sum_{n=N}^{\infty}\frac{1}{n\vert\Lambda\vert^n}\sum_{d\divides n,
d<n}\mu\(\textstyle\frac{n}{d}\)F_T(d)
\right\vert\le
\sum_{n=N}^{\infty}\frac{1}{n}\cdot n\cdot\bigo(\vert\Lambda\vert^{-n/2})
=\bigo\(\vert\Lambda\vert^{-N/2}\),
\]
so there is a\mc{constantwithmoebiushalfn}
constant~$C_{\ref{constantwithmoebiushalfn}}$ for which
\begin{equation*}%\label{equation:lemma2}
\left\vert
\sum_{n=1}^{N}\frac{1}{n\vert\Lambda\vert^n}\sum_{d\divides n,
d<n}\mu\(\textstyle\frac{n}{d}\)F_T(d)-
C_{\ref{constantwithmoebiushalfn}}\right\vert=
\bigo\(\vert\Lambda\vert^{-N/2}\).
\end{equation*}
Therefore, by Lemma~\ref{lemma:1} and using the notation
from~\eqref{equation:definesABC},
\begin{equation*}
M_T(N)=\sum_{n=1}^{N}\frac{1}{n}\(V_n+\bigo\(\kappa^{-n}\)\)+C_{\ref{constantwithmoebiushalfn}}
+\bigo\(\vert\Lambda\vert^{-N/2}\).\label{equation:basicrelationship}
\end{equation*}
Clearly\mc{constantforsumofbigoepsilonminusn} there is a
constant~$C_{\ref{constantforsumofbigoepsilonminusn}}$ for which
\begin{equation}\label{equation:bigoepsilonbound}
\left\vert\sum_{n=1}^{N}\frac{1}{n}\bigo\(\kappa^{-n}\)-C_{\ref{constantforsumofbigoepsilonminusn}}\right\vert
=\bigo\(\kappa^{-N}\),
\end{equation}
so by~\eqref{equation:basicrelationship}
and~\eqref{equation:bigoepsilonbound},
\begin{equation}\label{equation:mainboundforMTN}
M_T(N)=\sum_{n=1}^{N}\frac{1}{n}V_n+C_{\ref{constantwithmoebiushalfn}}+
C_{\ref{constantforsumofbigoepsilonminusn}}+\bigo(R^{-N})
\end{equation}
where~$R=\min\{\kappa,\vert\Lambda\vert^{1/2}\}.$
Since the
complex eigenvalues appear in conjugate pairs we may arrange
that~$\lambda_{i+t}=\bar{\lambda_{i}}$ for~$s+1\le i\le s+t$, and
then
\[
\vert\lambda_i-1\vert\vert\lambda_{i+t}-1\vert=(\lambda_i-1)(\lambda_{i+t}-1).
\]
It follows that~$V_n=\prod_{i=s+1}^{s+2t}(\lambda_i^n-1)$.
Put
\[
\Omega=\left\{\prod_{i\in I}\lambda_i \mid
I\subseteq\{s+1,\ldots,s+2t\}\right\},
\]
write
\[
{\mathcal I}(\omega)=\{I\subset\{s+1,\dots,s+2t\}\mid\prod_{i\in I}\lambda_i=\omega\},
\]
\[
K(\omega)=\sum_{I\in\mathcal{I}(\omega)}(-1)^{\vert I\vert},
\]
and let~$m=K(1)$ (notice that~$\mathcal{I}(\omega)=\emptyset$
unless~$\omega\in\Omega$).
Then~$V_n=\sum_{\omega\in\Omega}K(\omega)\omega^n$ so,
by~\eqref{equation:mainboundforMTN},\mc{anotherconstant}\mc{thatconstantplusgamma}
\begin{eqnarray*}
M_T(N)&=&\sum_{n=1}^{N}\frac{1}{n}\sum_{\omega\in\Omega}K(\omega)
\omega^n+C_{\ref{anotherconstant}}+\bigo\(R^{-N}\)\\
&=&m\sum_{n=1}^{N}\frac{1}{n}+\sum_{\omega
\in\Omega\setminus\{1\}}K(\omega)\sum_{n=1}^{N}\frac{\omega^n}{n}
+C_{\ref{anotherconstant}}+\bigo\(R^{-N}\)\\
&=& m\log N-\sum_{\omega\in\Omega\setminus\{1\}}K(\omega)
\log(1-\omega)+C_{\ref{thatconstantplusgamma}}+\bigo(N^{-1}),
\end{eqnarray*}
since~$\sum_{n=1}^{N}\frac{1}{n}=\log N+\gamma+\bigo(N^{-1})$,
and%\mc{constantinAbelsummation}
$\sum_{n=1}^{N}\frac{\omega^n}{n}=-\log(1-\omega)+\bigo(N^{-1})
$
for~$\omega\neq1$ by the Abel continuity theorem and partial
summation.

If the eigenvalues of modulus one
are~$\eul^{\pm2\pi{\imag}\theta_1},\dots,
\eul^{\pm2\pi{\imag}\theta_{t}}$ then
\[
V_n=\prod_{i=1}^{t}(1-\eul^{2\pi{\imag}\theta_in})
(1-\eul^{-2\pi{\imag}\theta_in})
=\prod_{i=1}^{t}\(2-2\cos(2\pi\theta_i n)\).
\]
Let~$X\subset\mathbb T^{t}$ be
the closure
of~$\{(n\theta_1,\dots,n\theta_{t})\mid n\in\mathbb Z\}$,
so that by the Kronecker--Weyl lemma we have
\[
\frac{1}{N}\sum_{n=1}^{N}\prod_{i=1}^{t}\(2-2\cos(2\pi\theta_i n)\)
\longrightarrow \int_{X}\prod_{i=1}^{t}\(2-2\cos(2\pi x_i)\)\dee
x_1\dots\dee x_t
\]
as~$N\to\infty$. Then, by partial summation,
\begin{eqnarray*}
\sum_{n=1}^{N}\frac{1}{n}V_n&=&\sum_{n=1}^{N}\(\frac{1}{n}-
\frac{1}{n+1}\)\sum_{m=1}^{n}V_m+\frac{1}{N+1}\sum_{m=1}^{N}V_m\\
&\sim&
\(\int_{X}\prod_{i=1}^{t}\(2-2\cos(2\pi x_i)\)\dee x_1\dots\dee x_t\)\log N,
\end{eqnarray*}
so that~$m$ has the form stated.
\end{proof}

The exact value of~$m$ is determined by the structure of the
group~$X$, which in turn is governed by additive relations among the
arguments of the eigenvalues of unit modulus. Here are some
illustrative examples.

\begin{example}\label{examples}
\noindent(a) If all the arguments~$\theta_i$ are independent
over~$\mathbb Q$ (the generic case), then~$X=\mathbb T^t$, so
\begin{equation*}
m\negthinspace=\negthinspace\int_0^1
\negthinspace\cdots\negthinspace\int_0^1\prod_{i=1}^{t}
\(2\negthinspace-\negthinspace 2\cos(2\pi x_i)\)\dee x_1\dots
\dee x_{t}
\negthinspace=\negthinspace
\( \int_0^1(2\negthinspace-\negthinspace 2\cos(2\pi x_1))\dee x_1
\negthinspace\negthinspace\)^{\negthinspace t}\negthinspace\negthinspace=2^t.
\end{equation*}

\smallskip
\noindent(b) A simple example with~$m>2^t$ is the following.
Let~$T_2$ be the automorphism of~$\mathbb T^8$ defined by the
matrix~$A\oplus A$, where
\begin{equation}\label{definesA}
A=\begin{pmatrix}0&0&0&-1\\1&0&0&8\\0&1&0&-6\\0&0&1&8
\end{pmatrix}.
\end{equation}
Here~$X$ is a diagonally embedded circle, and
\begin{eqnarray*}
m&=&\iint_{\{x_1=x_2\}}\prod_{j=1}^{2}
\(2-2\cos(2\pi jx_j)\)\dee x_1\dee x_2\\
&=&\int_0^1(2-2\cos(2\pi x))^2\dee x=6>2^2.
\end{eqnarray*}
Extending this example, let~$T_n$ be the automorphism
of~$\mathbb T^{4n}$ defined by the matrix~$A\oplus\cdots\oplus
A$ ($n$ terms). The matrix corresponding to~$T_n$ has~$2n$
eigenvalues with modulus one (comprising two conjugate
eigenvalues with multiplicity~$n$). Then~$X$ is again a
diagonally embedded circle, and
\begin{eqnarray*}
m%&=&\int\cdots\int_{\{x_1=\cdots=x_t\}}(2-2\cos(2\pi
%x_1))\cdots(2-2\cos(2\pi x_t))\dee x_1\dots\dee x_{t}\\
&=&\int_0^1(2-2\cos(2\pi x))^t\dee x\ =\  \frac{(2t)!}{(t!)^2}\ \sim\
\frac{2^{2t}}{\sqrt{\pi t}}
\end{eqnarray*}
by Stirling's formula.
This is much larger than~$2^t$,
reflecting the density of the syndetic set on
which the almost-periodic factor is
close to~$2^{2t}$. Indeed,
this example shows that~$\frac{m}{2^t}$ may be arbitrarily large.

\smallskip
\noindent(c) A simple example with~$m< 2^t$ is the following.
Let~$S$ be the automorphism of~$\mathbb T^{12}$ defined by the
matrix~$A\oplus A^2\oplus A^3$, with~$A$ as
in~\eqref{definesA}. Again~$X$ is a diagonally embedded circle,
and
\begin{eqnarray*}
m&=&\iiint_{\{x_1=x_2=x_3\}}\prod_{j=1}^{3}
\(2-2\cos(2\pi jx_j)\)\dee x_1{\rm d}x_2\dee x_3\\
&=&\int_0^1(2-2\cos(2\pi x))(2-2\cos(4\pi x))(2-2\cos(6\pi
x))\dee x\ =\ 6\ <\ 2^3.
\end{eqnarray*}
Extending this example, the value of~$m$ for the automorphism
of~$\mathbb T^{4t}$ defined by the matrix~$A\oplus
A^2\oplus\cdots\oplus A^t$ as~$t$ varies gives the sequence
\[ 2,4,6,10,12,20,24,34,44,64,78,116,148,208,286,410,556,808,1120,
1620,\dots
\]
(we thank Paul Hammerton for computing these numbers). This
sequence, entry~A133871 in the Encyclopedia of Integer
Sequences~\cite{MR95b:05001}, does not seem to be readily
related to other combinatorial sequences.

\smallskip
\noindent(d) Generalizing the example in~(c), for any
sequence~$(a_n)$ of natural numbers, we could look at the
automorphisms~$S_n$ of~$\mathbb T^{4n}$ defined by the
matrices~$\bigoplus_{k=1}^n A^{a_k}$, with~$A$ as
in~\eqref{definesA}. In order to make~$m$ small, we need a ``sum-heavy'' sequence, that is, one with many
three-term linear relations of the form~$a_i+a_j=a_k$.
More precisely, one would like many
linear relations with an odd number of terms, and few with an
even number of terms. Constructing such sequences, and understanding
how dense they may be, seems to be difficult.

Taking~$(a_n)$ to be the sequence whose first eight terms
are~$1,2,3,5,7,8,11,13$ and whose subsequent terms are defined
by the recurrence~$a_{n+8}=100a_n$, we find that the
automorphism~$S_{8n}$ of~$\mathbb T^{32n}$
has~$m=2^{4n}=2^{t/2}$. Thus~$\frac m{2^t}$ may be arbitrarily
small.
\end{example}

We close with some remarks.

\smallskip
\noindent(a) In the quasihyperbolic case the~$\bigo(1/N)$ term
is oscillatory, so no improvement of the asymptotic in terms of
a monotonic function is possible. The extent to which the
exponential dominance of the entropy term fails in this setting
is revealed by the following. Let~$F_T(n)$ denote the number of
points fixed by the automorphism~$T^n$. On the one hand,
Baker's theorem implies that~$F_T(n)^{1/n}\rightarrow e^h$
as~$n\to\infty$. On the other hand Dirichlet's theorem shows
that~$F_T(n+1)/F_T(n)$ does not converge
(see~\cite[Th.~6.3]{MR1461206}).

\smallskip
\noindent(b) The formula for~$m$ in the statement
of~\cite[Th.~1]{MR1787646} is incorrect in a minor way; as stated
in~\cite[Rem.~2]{MR1787646} and as illustrated in the examples
above,~$m$ should be~$K(1)$, which is not necessarily the same
as~$2^{t}$.

\smallskip
\noindent(c) The proof of Theorem~\ref{theorem} also gives an
elementary proof of the asymptotics in the hyperbolic case: in
the notation of the proof, $V_n=1$ so $m=1$. Applying now the
Euler-MacLaurin summation formula (see Ram Murty~\cite[Th.
2.1.9]{MR1803093}) we get an asymptotic of the shape
\[
\sum_{\vert\tau\vert\le N}\frac{1}{\eul^{h\vert\tau\vert}}=\log
N+C_{\ref{mainconstantquasicase}}+\sum_{r=0}^{k-1}\frac{B_{r+1}}{(r+1)N^{r+1}}+\bigo\(N^{-(k+1)}\),
\]
where $B_1=-\frac 12$, $B_2=\frac 16$,\ldots are the Bernoulli
numbers, for any~$k\ge1$.

%%\bibliographystyle{plain}

%%\bibliography{mertensrefs}
\def\cprime{$'$}

\end{document}